\documentclass{article}
\usepackage[left=3cm,right=3cm,top=3.5cm,bottom=3.5cm]{geometry} % page settings
\usepackage{amsmath} % provides many mathematical environments & tools
\usepackage{amsmath,amsthm}
\usepackage{amssymb,latexsym}
\usepackage{enumerate}

\setlength{\parindent}{7mm}

\DeclareMathOperator{\bin}{bin}
\DeclareMathOperator{\ord}{ord}

\newtheorem{thm}{Theorem}[section]
\newtheorem{col}[thm]{Corollary}
\newtheorem{lem}[thm]{Lemma}
\newtheorem{conj}[thm]{Conjecture}

\def\l({\left(}
\def\r){\right)}

\begin{document}

\title{A note on the arithmetic properties of Stern Polynomials}
\author{Maciej Gawron}
\date{\today}
\maketitle
\footnote{2010 \emph{Mathematics Subject Classification}: 11B83.}
\footnote{This research was supported by Grant DEC-2012/07/E/ST1/00185.}
\footnote{Key words and phrases. Stern polynomials, Stern diatomic sequence, Automatic Sequences.}

\begin{abstract}
We investigate the Stern polynomials defined by $B_0 ( t ) =0,B_1 ( t ) =1$, and for $n \geq 2$ by the recurrence relations 
$B_{2n}( t) =tB_{n}( t) ,$ $B_{2n+1}( t) =B_n( t) +B_{n+1}( t) $.
We prove that all possible rational roots of that polynomials are $0,-1,-1/2,-1/3$. We give complete characterization of $n$
such that $\deg( B_n)  = \deg( B_{n+1}) $ and $\deg( B_n) =\deg( B_{n+1}) =\deg( B_{n+2}) $. Moreover, we present 
some result concerning reciprocal Stern polynomials. 
\end{abstract}

\begin{section}{Introduction}
\indent  We consider the sequence of Stern polynomials $( B_n( t) ) _{n \geq 0}$ defined by the following formula

 \begin{equation*}
  B_0( t) =0, \quad B_1( t)  = 1, \quad B_n( t)  = \begin{cases} tB_{n/2}( t)  & \text{ for } n \text{ even,} \\
					B_{( n+1) /2}( t) +B_{( n-1) /2}( t)  & \text{ for } n \text{ odd.}
                                       \end{cases}
 \end{equation*} 
This sequence was introduced in \cite{KMP} as a polynomial analogue of the Stern  \cite{STR} sequence $(s_n) _{n \in \mathbb{N}}$, where 
$s_n = B_n(1)$.
Arithmetic properties of the Stern polynomials and the sequence of degrees of Stern polynomials $e( n)  = \deg B_n( t) $
were considered in \cite{UL1, UL2}. Reducibility properties of Stern polynomials were considered in $\cite{SCH}$.  
The aim of this paper is to resolve some open problems and conjectures given in \cite{UL1}. 

\indent In Section 2 we prove that the only possible rational roots of the Stern polynomials are in the set $\{0,-1, - \frac 12, - \frac 13 \}$.
We prove that each of those numbers is a root of infinitely many Stern polynomials. We also give some characterization of $n$ 
such that $B_n( t) $ has one of these roots. 

\indent It was proven in \cite{UL1} that there are no four consecutive
Stern polynomials with equal degrees. In Section 3 we characterize the set of $n$ for which $e( n) =e( n+1) $, i.e. 
these $n$ for which two consecutive Stern polynomials has the same degree. We 
also characterize the set of $n$ for which $e( n) =e( n+1) =e( n+2) $. 

\indent In Section 4 we investigate reciprocal Stern polynomials, i.e. polynomials such that $B_n( t) =t^{e( n) }B_n( \frac 1t) $. 
It was observed in \cite{UL1} that $B_n( t) $ is reciprocal for each $n$ of the form $2^m-1$ or $2^m-5$. We give two infinite sequences
$( u_n) ,( v_n) $ such that, $B_n( t) $ is reciprocal for each $n$ of the forms $2^m-u_n$, and $2^m-v_n$. 

\indent For $n \in \mathbb{N}$ we denote by $\bin( n) $ sequence of its binary digits without leading zeros. 
Let $w=( w_0,w_1,\ldots ,w_n) $ be a finite sequence of zeros and ones, we denote by $\overline{w}= \overline{w_0w_1\ldots w_n}$ the number 
$w_0\cdot 2^{n}+w_1 \cdot 2^{n-1}+\ldots+w_n$. For a finite sequence $w$ of zeros and ones,
we denote by $w^k$ concatenation of $k$ copies of $w$. 
For a set $A \subset \mathbb{Z}$ and an integer $n$, we denote by $A+n$ the set $\{a+n|a \in A\}$. For a subset $A \subset \mathbb{N}$
of non-negative integers we say that it has lower density $\delta$ if $\liminf\limits_{n \to \infty}  \frac{\#( A \cap [1,n]) }{n} = \delta$.
We say that it has upper density $\delta$ if $\limsup\limits_{n \to \infty}  \frac{\#( A \cap [1,n]) }{n} = \delta$. If lower and upper density
are both equal $\delta$ then we say that our set has density $\delta$.  
The symbol $\log$ means always binary logarithm. We use standard Landau symbols.

\end{section}

\begin{section}{Rational roots of Stern polynomials}
 We prove the following theorem, which can be found as a conjecture in \cite[Conjecture 6.1]{UL1}. 
 \begin{thm} If $a \in \mathbb{Q}$ and there exists a positive integer $n$ such that $B_n( a) =0$ then
 $a \in \{0,-1,-\frac{1}{2}, - \frac{1}{3} \}$.
 \end{thm}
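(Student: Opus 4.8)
The plan is to combine elementary sign considerations with the rational root theorem to pin down the numerator, and then an invariant-region argument to bound the denominator. First I would record two easy facts. An induction on $n$ shows that every $B_n(t)$ has non-negative integer coefficients; since $B_n$ is a nonzero polynomial for $n\ge 1$, this already gives $B_n(a)>0$ for every $a>0$, so there are no positive rational roots. A second induction, using $B_{2n}(0)=0$ and the observation that in $B_{2n+1}(0)=B_n(0)+B_{n+1}(0)$ exactly one of $n,n+1$ is odd, shows $B_m(0)=1$ for odd $m$ and $B_n(0)=0$ for even $n$. Thus $0$ is a root precisely for even $n$; and writing $n=2^k m$ with $m$ odd we have $B_n=t^kB_m$, so every nonzero rational root of some $B_n$ is in fact a root of $B_m$ for an odd $m$.

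Now fix an odd $m$ and a negative rational root $a=-P/Q$ in lowest terms, $P,Q\ge 1$. Because $B_m$ has integer coefficients with constant term $1$, the rational root theorem forces $P\mid 1$, hence $P=1$ and $a=-1/Q$. The values $Q=1,2,3$ give exactly the three negative numbers in the claimed set, so the entire theorem reduces to the single assertion that $-1/q$ is \emph{not} a root of any $B_m$ once $q\ge 4$.

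This positivity statement is the heart of the matter, and I expect it to be the main obstacle. I would track the pair $v_n=(B_n(a),B_{n+1}(a))$, which evolves along the binary digits of $n$ by the two linear maps $M_0=\left(\begin{smallmatrix}a&0\\1&1\end{smallmatrix}\right)$ and $M_1=\left(\begin{smallmatrix}1&1\\0&a\end{smallmatrix}\right)$ starting from $v_1=(1,a)$. Writing $\alpha=1/q\le 1/4$, $x=B_n(a)$, $y=B_{n+1}(a)$, the goal is to prove by induction on $n$ the invariant that $v_n$ stays in the fixed convex cone $C=\{(x,y):x+y>0,\ |x-y|\le(q-1)(x+y)\}$, which lies strictly inside the half-plane $\{x+y>0\}$. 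Since $B_{2n+1}(a)=B_n(a)+B_{n+1}(a)=x+y$, the part $x+y>0$ of the invariant is exactly the desired positivity at every odd index (together with $B_1(a)=1$), so establishing the invariant finishes the proof.

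To verify that $C$ is invariant under $M_0$ and $M_1$, I would pass to the projective coordinate $\rho=(x-y)/(x+y)$, on which the two maps act by the Möbius transformations $g_0(\rho)=\frac{\rho+1+\beta}{\rho+1-\beta}$ and $g_1(\rho)=\frac{\beta+1-\rho}{\beta-1+\rho}$, where $\beta=2/\alpha\ge 8$ and $g_1(\rho)=-g_0(-\rho)$. The cone condition becomes $|\rho|\le c$ with $c=q-1=\tfrac12(\beta-2)$; a short computation shows $g_0$ is decreasing with no pole on $[-c,c]$ and that $g_0(\pm c)\in[-c,c]$, the critical inequality $g_0(c)\ge -c$ reducing exactly to $\beta(\beta-8)\ge 0$ — which is precisely why the threshold is $q\ge 4$. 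The symmetry $g_1(\rho)=-g_0(-\rho)$ disposes of $M_1$, while the sign of $x+y$ is preserved because the common factor $\tfrac12\big((2-\alpha)-\alpha\rho\big)$ governing $x+y\mapsto (x+y)'$ equals at least $\tfrac12$ on $[-c,c]$ for this choice of $c$. The base case holds since $v_1=(1,-\alpha)$ satisfies $1+\alpha\le(q-1)(1-\alpha)$ for $q\ge 4$. Assembling the four cases — positive $a$, $a=0$, the reduction to $a=-1/q$, and the exclusion of $q\ge 4$ — yields the theorem.
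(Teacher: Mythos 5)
Your proposal is correct and is essentially the paper's proof in different clothing: the paper performs exactly your reduction (nonnegative coefficients exclude positive roots, constant term $1$ at odd indices plus the rational root theorem force $a=-1/k$, and $B_{2n}=tB_n$ restricts attention to odd indices), and then proves by induction on the binary recursion the invariant $B_{2n+1}(a)>\tfrac12\max\{|B_n(a)|,|B_{n+1}(a)|\}>0$ for $k\ge 4$. Writing $x=B_n(a)$, $y=B_{n+1}(a)$ and using $\max\{|x|,|y|\}=\tfrac12\bigl(|x+y|+|x-y|\bigr)$, the paper's invariant is precisely the cone $x+y>0$, $|x-y|<3(x+y)$ --- the critical $q=4$ member of your family $|x-y|\le(q-1)(x+y)$ (which is also why your boundary computation gives the constant $g_0(c)=-3$) --- so your M\"obius-transformation verification is the same induction carried out in projective coordinates.
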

 \begin{proof} Let $k \geq 4$ be an integer. We define a sequence $\{b_n\}_{n=0}^{\infty}$ by the formula $b_n = B_n( -\frac{1}{k}) $.
 We prove that the following inequality holds  
 \begin{equation}
   \label{eq1}
  b_{2n+1} > \frac{1}{2} \max \{|b_n|,|b_{n+1}|\} > 0
 \end{equation}
for every integer $n \geq 0$. In order to prove \eqref{eq1} we use induction. For $n=0$ we get $1=b_1 > \frac{1}{2} \max \{ |b_0|,|b_1| \} = \frac{1}{2}>0$. Suppose that the statement is true
for all $m<n$. Let us consider two cases $n=2s+1$ and $n=2s$. 

If $n=2s$ then we have
\begin{equation}
 b_{4s+1} = b_{2s}+b_{2s+1} = -\frac{1}{k} b_s + b_{2s+1} \geq b_{2s+1}-\frac{1}{4}|b_s| > 
 b_{2s+1}-\frac{1}{2} b_{2s+1} =  \frac{1}{2} b_{2s+1}.
\end{equation}
Moreover, $b_{2s+1} \geq \frac{1}{2} |b_s| $ and thus $\max \{|b_{2s}|,|b_{2s+1}|\} = \max \{\frac{1}{k}|b_{s}|,|b_{2s+1}|\} = b_{2s+1}>0$.  
So in this case our inequality is proved.

If $n=2s+1$ then we have
\begin{equation}
 b_{4s+3} = b_{2s+1}+b_{2s+2} = b_{2s+1}-\frac{1}{k}b_{s+1} \geq b_{2s+1}-\frac{1}{4}|b_{s+1}| > 
 b_{2s+1} - \frac{1}{2} b_{2s+1} = \frac{1}{2} b_{2s+1}.
\end{equation}
Moreover, $b_{2s+1} \geq \frac{1}{2} |b_{s+1}|$ and thus $\max \{|b_{2s+1}|,|b_{2s+2}|\} = \max \{\frac{1}{k}|b_{s+1}|,|b_{2s+1}|\} = b_{2s+1}>0$.  
So in this case our inequality is also proved.

Now let $a \in \mathbb{Q}$ be a root of the Stern polynomial $B_n$. If $a \neq 0$ then we can assume that $n$ is odd. Moreover $B_{2s+1}( 0) =1$
for each integer $s$, so $a$ is of the form $\pm \frac{1}{k}$ for some integer $k$.  But 
all coefficients of Stern polynomials are non-negative, so $a = -\frac{1}{k}$. Now our theorem follows from inequality \eqref{eq1} .  

\end{proof}

Now for each $a \in \{0,-1,-\frac{1}{2},-\frac{1}{3} \}$ we want to characterize those $n$ for which $B_n( a) =0$. Let us denote
$R_a=\{n \in \mathbb{N} | B_n( a)  = 0\}$. One can check that $B_n( 0)  = n \pmod 2$ and
$B_n( -1)  = ( n+1)  \pmod 3 -1$, so in this case our problem is easy and we get $R_0=2\mathbb{N}$ and  $R_{-1}=3\mathbb{N}$. 
The problem is much more complicated when $a \in \{-\frac{1}{2},-\frac{1}{3}\}$. 
\begin{thm} The sets $R_{- \frac 12}, R_{- \frac 13} $ satisfies the following properties
 \begin{enumerate}[(a)]  
	     \item $2R_{ - \frac 12 }\subset R_{ - \frac 12 },2R_{ - \frac 13} \subset R_{ - \frac 13}$,
             \item $R_{-\frac{1}{2}} \subset 5\mathbb{N}$, $R_{-\frac{1}{3}} \subset 21 \mathbb{N}$,
             \item If $5n \in R_{-\frac{1}{2}}$, and $2^k>5n$ then $5( 2^k \pm n)  \in R_{-\frac{1}{2}}$. 
	     \item If $21n \in R_{-\frac{1}{3}}$, and $2^k >21n$ then $21( 2^k \pm n)  \in R_{-\frac{1}{3}}$. 
 \end{enumerate}
\begin{proof}
 Part $( a) $ is obvious. We can look only for odd elements of $R_{-\frac 12},R_{- \frac 13}$. Looking at the
reductions modulo $5$ we get $B_n( - \frac{1}{2})  \equiv B_n( 2)  = n \pmod 5$ so $R_{- \frac 12} \subset 5 \mathbb{N}$.
By looking modulo $14$ we get $R_{- \frac 13} \subset 21 \mathbb{N}$. 
Let us prove part $( c) $. We prove by induction on $n$ that $B_{5 \cdot 2^k \pm n}( - \frac 12)  = \mp \frac 14
B_{n}( - \frac 12) $, when $2^k \geq n$. For $n=1$ then we have to prove $B_{5 \cdot 2^k \pm 1}( - \frac 12)  = \mp \frac 14$,
which can be easily proven by induction on $k$. Now if $n=2s$ then
\begin{equation*}
B_{5 \cdot 2^k \pm 2s}\l( - \frac 12 \r)  = - \frac 12 B_{5 \cdot 2^{k-1} \pm s}\l( - \frac 12\r)  = 
- \frac 12 \l(  \mp \frac 14 B_s\l( - \frac 12\r) \r)  = \mp \frac 14 B_{2s}\l( - \frac 12\r) .    
\end{equation*}
If $n=2s+1$ then
\begin{align*}
B_{5 \cdot 2^k \pm ( 2s+1 ) }\l( - \frac 12\r)  &= B_{5 \cdot 2^{k-1} \pm s}\l( - \frac 12\r)  + 
B_{5 \cdot 2^{k-1} \pm ( s+1) }\l( - \frac 12\r) \\ &= \mp \frac 14 \l( B_s \l( - \frac 12\r) +B_{s+1}\l( - \frac 12\r)  \r) 
= \mp \frac 14 B_{2s+1}\l( -\frac 12\r) .  
\end{align*}
Which completes the induction step, and part $( c) $ follows. In the same manner one can prove that for $2^k>n$ we have
$B_{21 \cdot 2^k \pm n}( - \frac 13)  = \mp \frac 1{27} B_{n}( -\frac 13) $ which implies part $( d) $.   
\end{proof}
\end{thm}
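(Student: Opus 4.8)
The plan is to treat the four parts in increasing order of difficulty, the first two being bookkeeping and the last two resting on a single explicit identity. For part (a) I would argue directly from the recurrence: if $B_n(a)=0$ then $B_{2n}(a)=a\,B_n(a)=0$, so $n\in R_a$ forces $2n\in R_a$ for any value $a$, in particular for $a=-\frac12$ and $a=-\frac13$.

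For part (b) the idea is to reduce the rational number $B_n(a)$ modulo a well-chosen integer coprime to the denominator of $a$, exploiting the clean closed form $B_n(2)=n$, which follows at once by induction from $B_{2n}(2)=2B_n(2)$ and $B_{2n+1}(2)=B_n(2)+B_{n+1}(2)$. Since $-\frac12\equiv 2\pmod 5$, reduction modulo $5$ gives $B_n(-\frac12)\equiv B_n(2)=n\pmod 5$, so a root forces $5\mid n$. For $-\frac13$ I would work modulo $14$ and split via the Chinese Remainder Theorem: modulo $7$ one has $-\frac13\equiv 2$, hence $B_n(-\frac13)\equiv n\pmod 7$ and $7\mid n$; modulo $2$ one has $-\frac13\equiv 1$, hence $B_n(-\frac13)\equiv B_n(1)=s_n\pmod 2$, and since the Stern sequence satisfies $s_n\equiv 0\pmod 2$ exactly when $3\mid n$, a root forces $3\mid n$. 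Together these give $21\mid n$.

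The core of the theorem is parts (c) and (d), and here I would isolate and prove the scaling identity
\[
B_{5\cdot 2^{k}\pm m}\!\left(-\tfrac12\right)=\mp\tfrac14\,B_{m}\!\left(-\tfrac12\right)\qquad\text{whenever }2^{k}\ge m,
\]
by induction on $m$. The base case $m=1$, i.e.\ $B_{5\cdot 2^{k}\pm 1}(-\frac12)=\mp\frac14$, I would settle by a separate induction on $k$ using the even/odd recurrences. For the inductive step I would write $m=2s$ or $m=2s+1$: the even case uses $B_{2N}=-\frac12 B_N$ with $N=5\cdot 2^{k-1}\pm s$, and the odd case uses $B_{2N+1}=B_N+B_{N+1}$, in both cases applying the inductive hypothesis to the strictly smaller indices $s$ and $s+1$ (the hypothesis $2^{k}\ge m$ is exactly what guarantees $2^{k-1}\ge s$ and $2^{k-1}\ge s+1$, keeping the halved indices in range). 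Granting the identity, part (c) follows by setting $m=5n$: if $5n\in R_{-\frac12}$ and $2^{k}>5n$, then $B_{5(2^{k}\pm n)}(-\frac12)=B_{5\cdot 2^{k}\pm 5n}(-\frac12)=\mp\frac14 B_{5n}(-\frac12)=0$. Part (d) is entirely analogous, with $5$ replaced by $21$ and the factor $\mp\frac14$ replaced by $\mp\frac1{27}$, reflecting the binary patterns $5=\overline{101}$ and $21=\overline{10101}$.

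I expect the main obstacle to be the proof of the scaling identity, and within it the careful coordination of the two sign choices — the $\pm$ in the index forcing $\mp$ in the constant — across the two parity cases. In particular the subtraction case $m=2s+1$, where $5\cdot 2^{k}-(2s+1)=2(5\cdot 2^{k-1}-(s+1))+1$, must be reorganized so that the inductive hypothesis applies to $s$ and $s+1$ with the correct sign, and one must verify that the range condition $2^{k}\ge m$ is preserved under this rewriting. Once the base case and these sign conventions are pinned down, the rest of the induction is routine.
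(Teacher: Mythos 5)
Your proposal is correct and follows essentially the same route as the paper: part (a) from the even recurrence, part (b) by reducing modulo $5$ and modulo $14$ using $B_n(2)=n$, and parts (c), (d) via the identical scaling identity $B_{5\cdot 2^k\pm m}(-\tfrac12)=\mp\tfrac14 B_m(-\tfrac12)$ (resp.\ with $21$ and $\mp\tfrac1{27}$), proved by the same strong induction on $m$ with base case $m=1$ handled by induction on $k$. The only difference is cosmetic: where the paper just says ``by looking modulo $14$,'' you spell this out via the Chinese Remainder Theorem as a mod-$7$ reduction (again using $B_n(2)=n$) plus a mod-$2$ reduction to the standard parity fact $2\mid s_n \Leftrightarrow 3\mid n$, which correctly fills in the paper's terse step.
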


Using previous theorem we can check that no other reductions of our sequences are eventually periodic. 
%But they are of course $2$-automatic. 

\begin{col} \begin{enumerate}[(a)]
             \item For each prime number $p \neq 2,5$ the sequence $( B_n( - \frac 12)  \pmod p) _{n \geq 0}$ is not eventually periodic.
	     \item For each prime number $p \neq 2,3,7$ the sequence $( B_n( - \frac 13)  \pmod p) _{n \geq 0}$ is not eventually periodic.
            \end{enumerate}
\end{col}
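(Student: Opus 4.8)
The plan is to argue by contradiction, exploiting the self-similar identity hidden in the proof of the previous theorem. Write $a_n = B_n(-\tfrac12) \bmod p$. Since $p \neq 2$ the reduction is well defined (the denominators are powers of $2$) and $u := -4^{-1}$ lies in $\mathbb{F}_p^{\times}$; moreover the identity $B_{5\cdot 2^k + n}(-\tfrac12) = -\tfrac14 B_n(-\tfrac12)$, established there for $2^k \ge n$, reads $a_{5\cdot 2^k + n} = u\, a_n$. I would also record that $B_{2^m}(-\tfrac12) = (-\tfrac12)^m$, so $a_{2^m} = (-2^{-1})^m \neq 0$; in particular $(a_n)$ is not eventually zero. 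Now suppose, for contradiction, that $(a_n)_{n\ge 0}$ is eventually periodic, and let $T$ be its minimal eventual period with eventual pattern $\alpha : \mathbb{Z}/T\mathbb{Z} \to \mathbb{F}_p$, so that $a_n = \alpha_{n \bmod T}$ for $n \ge N$.

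The core step is the following. For any residue $\rho$ attained by $5\cdot 2^k \bmod T$ for infinitely many $k$, I would fix a residue class $j$ and take $n \ge N$ in that class together with $k$ large (so $2^k \ge n$ and $5\cdot 2^k + n \ge N$) and $5\cdot 2^k \equiv \rho$; the identity $a_{5\cdot 2^k + n} = u\, a_n$ then gives $\alpha_{\rho + j} = u\,\alpha_j$ for every $j$. By the pigeonhole principle such a $\rho$ exists, and $2\rho$ is attained along the indices $k+1$, hence infinitely often as well, so it satisfies the same relation with the same factor $u$. Comparing yields $\alpha_{\rho + j} = \alpha_{2\rho + j}$, i.e. $\alpha$ is invariant under the shift by $\rho$. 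Minimality of $T$ forces $\rho \equiv 0 \pmod T$, whence $\alpha_j = u\,\alpha_j$ for all $j$.

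Since $\alpha \not\equiv 0$ (e.g.\ at residues of large powers of two), this forces $u = 1$, that is $-4^{-1} \equiv 1$, i.e.\ $p \mid 5$; as $p \neq 5$ this is the desired contradiction, proving part (a). Part (b) is the same argument verbatim with $-\tfrac13$ in place of $-\tfrac12$: one needs $p \neq 3$ so that $u := -27^{-1} \in \mathbb{F}_p^{\times}$ and the reduction is defined, one uses $a_{21\cdot 2^k + n} = u\,a_n$ from part (d) together with $a_{2^m} = (-3^{-1})^m \neq 0$, and the final step gives $u = 1 \iff -27^{-1} \equiv 1 \iff p \mid 28$, i.e.\ $p \in \{2,7\}$. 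Thus the excluded primes are exactly $2,3,7$.

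The main obstacle is this middle step: squeezing periodicity against the doubling structure of the index $5\cdot 2^k$. The delicate points are that both $\rho$ and $2\rho$ occur infinitely often (legitimizing the shift-by-$\rho$ invariance) and the appeal to minimality of $T$; everything else is bookkeeping. It is worth remarking that the exclusions are sharp and transparent: modulo $5$ one has $-\tfrac12 \equiv 2$ and modulo $7$ one has $-\tfrac13 \equiv 2$, while $B_n(2) = n$, so in exactly those cases the reduced sequence is $(n \bmod p)$ and is genuinely periodic.
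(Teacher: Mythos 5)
Your proof is correct, but it extracts the contradiction by a genuinely different mechanism than the paper, even though both rest on the same key identity from the proof of Theorem 2.2, namely $B_{5\cdot 2^k+n}\left(-\frac12\right) = -\frac14 B_n\left(-\frac12\right)$ for $2^k \geq n$ (and its analogue at $-\frac13$ with factor $-\frac1{27}$). The paper uses eventual periodicity to propagate \emph{zeros}: starting from some $B_{5n}\equiv 0 \pmod p$, it applies the identity to get $B_{5(2^k+n)}\equiv 0$, subtracts suitable multiples of the period $2^u(2v+1)$ to land on $B_{5(n+1)}\equiv 0$, concludes inductively that $B_M\left(-\frac12\right)\equiv 0 \pmod p$ for \emph{every} multiple $M$ of $5$ past the preperiod, and then contradicts this with the explicit evaluation $B_{15+5\cdot 2^n}\left(-\frac12\right) = -\frac5{32}\not\equiv 0 \pmod p$. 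You instead play the identity against the \emph{minimal} eventual period $T$: pigeonholing the residues $5\cdot 2^k \bmod T$, deriving $\alpha_{\rho+j}=u\,\alpha_j$ for both $\rho$ and (by doubling $k\mapsto k+1$) $2\rho$, deducing shift-invariance of the pattern and hence $\rho\equiv 0\pmod T$ by minimality (the shifts fixing $\alpha$ form a subgroup of $\mathbb{Z}/T\mathbb{Z}$, so invariance under $\rho$ gives invariance under $\gcd(\rho,T)$, a smaller eventual period unless $T\mid\rho$), and finally $u=1$, i.e.\ $p\mid 5$, respectively $p\mid 28$ — exactly the excluded primes. All steps check out, including the nonvanishing input $\alpha\not\equiv 0$ via $B_{2^m}\left(-\frac12\right)=\left(-\frac12\right)^m$. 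Your route buys two things: it needs neither a known zero of $B_{5n}$ modulo $p$ nor any ad hoc nonzero evaluation like $-\frac5{32}$, since everything is forced by the multiplier $u\neq 1$; and your closing observation that $B_n(2)=n$ with $-\frac12\equiv 2\pmod 5$ and $-\frac13\equiv 2\pmod 7$ shows the excluded primes are sharp, a point the paper leaves implicit. The paper's version is more concrete and somewhat shorter, trading your minimal-period bookkeeping for one explicit computation.
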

\begin{proof}
Let us prove part $( a) $. Suppose that for $n>N$ we have $B_n( -\frac 12)  \equiv B_{n+2^u( 2v+1) }( - \frac 12)  \pmod p$.
Let us take $n$ such that $B_{5n}( - \frac 12)  \equiv 0 \pmod p$. 
We choose $k$ such that $2^{k+u}-1 = ( 2v+1) s$ for some integer $s$, 
and $2^{k+u}>5\cdot 2^u n$. From previous theorem we have that $B_{5( 2^u n+2^{k+u}) }( - \frac 12)  \equiv 0 \pmod p$, 
so $0 \equiv B_{5( 2^u n+2^{k+u}) -5( 2^u( 2v+1) s) }( - \frac 12)   \equiv B_{5( 2^u n+2^u) }( - \frac 12)  \equiv B_{5( n+1) }( - \frac 12) 
\pmod p$,
hence all numbers $M$ divisible by $5$ greater than $N$ satisfy $B_M( - \frac 12)  \equiv 0 \pmod p$. But one can check that
$B_{15+5\cdot 2^{n}} = -\frac 5{32}$ for $n > 3$ and this number is not $0 \pmod p$. We get a contradiction. 
Proof of part $( b) $ can be done in the same manner. 
\end{proof}

Using reductions modulo prime numbers we prove that the sets $R_{- \frac 12},R_{- \frac 13}$ has lower density $0$.

\begin{lem}
 Let $p>3$ be a prime number, $t$ be a fixed constant from the set $\{- \frac 12,- \frac 13\}$, and $d_{n,p} = \frac{ \# \{ 0 \leq k < n | B_k( t)  \equiv 0 \pmod p \} }{n}$.
 We have that 
\begin{equation*}
 \lim_{n \to \infty} \frac{d_{2^0,p}+d_{2^1,p}+\ldots+d_{2^n,p}}{n+1} \leq \frac{2}{\log p}.
\end{equation*}
\end{lem}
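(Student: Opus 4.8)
The plan is to convert the whole question into a random walk on the projective line $\mathbb{P}^1(\mathbb{F}_p)$ and to extract the factor $\log p$ from the arithmetic size of the numbers $b_n := B_n(t)$. Write $t=-1/q$ with $q\in\{2,3\}$, so the recurrences read $b_{2n}=tb_n$ and $b_{2n+1}=b_n+b_{n+1}$; hence the vectors $v_n=\begin{pmatrix}b_n\\ b_{n+1}\end{pmatrix}$ evolve by $v_{2n}=M_0v_n$, $v_{2n+1}=M_1v_n$, where $M_0=\begin{pmatrix}t&0\\ 1&1\end{pmatrix}$ and $M_1=\begin{pmatrix}1&1\\ 0&t\end{pmatrix}$. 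Since $\det M_0=\det M_1=t\neq 0$ in $\mathbb{F}_p$ (as $p>3$), both lie in $\mathrm{GL}_2(\mathbb{F}_p)$ and act on $\mathbb{P}^1(\mathbb{F}_p)$. First I would record that $v_k$ is never $\equiv 0$ (it is an invertible image of $v_0=\begin{pmatrix}0\\ 1\end{pmatrix}$ or $v_1=\begin{pmatrix}1\\ t\end{pmatrix}$), so $P_k:=[b_k:b_{k+1}]$ is a well-defined point and $b_k\equiv 0\iff P_k=[0:1]$. Thus counting zeros is counting visits to the single point $[0:1]$.

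Splitting $k\in[0,2^{j+1})$ by its last binary digit gives $N_{j+1}(Q)=N_j(M_0^{-1}Q)+N_j(M_1^{-1}Q)$ for $N_j(Q)=\#\{k<2^{j}:P_k=Q\}$, with $N_0=\delta_{[0:1]}$. Normalising, $\mu_j:=2^{-j}N_j$ is precisely the law after $j$ steps of the Markov chain on $\mathbb{P}^1(\mathbb{F}_p)$ that applies $M_0$ or $M_1$ with probability $\tfrac12$, started from $[0:1]$. The key identity is then $d_{2^j,p}=2^{-j}N_j([0:1])=\mu_j([0:1])$, so the quantity to be bounded is the Cesàro average $\frac{1}{n+1}\sum_{j=0}^n\mu_j([0:1])$, i.e. the mean fraction of time the walk spends at $[0:1]$.

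To produce the factor $\log p$ I would use the size of $b_m$. Clearing denominators, $\widehat M_i:=qM_i$ has integer entries bounded by $q$, so for an $L$-digit $m$ a suitable power $q^{O(L)}b_m$ is a fixed entry of the integer matrix $\widehat M_{\epsilon_{L-1}}\cdots\widehat M_{\epsilon_1}\begin{pmatrix}q\\ -1\end{pmatrix}$, of absolute value at most $(2q)^{O(L)}$. Hence if $b_m\equiv 0\pmod p$ while $b_m\neq 0$ in $\mathbb{Q}$, this integer is a nonzero multiple of $p$, so $(2q)^{O(L)}\ge p$, forcing $L\asymp\log p$. In walk language: starting from $[1:t]=M_1[0:1]$ the chain cannot reach $[0:1]$ through a non-rational zero in fewer than $\asymp\log p$ steps. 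This bounds the expected return time to $[0:1]$ from below by a multiple of $\log p$, hence the stationary occupation — and its Cesàro average — from above by $O(1/\log p)$; the bookkeeping of the constants is arranged to give the clean value $2/\log p$, using also that zeros are closed under doubling ($b_{2m}=tb_m$) so that visits to $[0:1]$ organise into the geometric "runs" of $M_0$-steps which the averaging handles cheaply.

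The main obstacle is exactly the genuine rational zeros of $B_m$, such as $B_5(-\tfrac12)=0$: these vanish modulo every prime and so let the walk hit $[0:1]$ after a bounded number of steps, entirely invisibly to the size estimate. The crux of the argument is therefore to show that such rational zeros are too sparse to inflate the occupation — quantitatively, that with probability bounded away from $0$ the walk avoids them for its first $\asymp\log p$ steps, so that the expected return time stays of order $\log p$. I would control this by the self-similarity of the zero set: a rational zero below a given one corresponds to a shorter word again sending $[1:t]$ to $[0:1]$, which reduces the count of short excursions to the identical count one level down and lets the doubling-closed structure of the rational roots be fed into an induction. Marrying this self-reduction for the rational zeros with the uniform size bound for all non-rational zeros, and doing so with a constant that is uniform over $q\in\{2,3\}$ (where the growth rates $(2q)^L$ differ), is the delicate point of the proof.
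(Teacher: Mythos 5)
Your setup is sound and in fact parallels the paper's: the paper also linearizes via the pair $(b_n,b_{n+1})$, obtaining an automaton on affine states $(\alpha,\beta)\in\mathbb{F}_p^2$ with transitions $(\alpha,\beta)\mapsto(t\alpha+\beta,\beta)$ and $(\alpha,\beta)\mapsto(\alpha,t\beta+\alpha)$, and identifies $d_{2^j,p}$ with the mass that the corresponding $\frac12$--$\frac12$ walk puts on the zero-output states after $j$ steps. The genuine gap is in your mechanism for extracting $\log p$. You propose to lower-bound the expected return time to $[0:1]$ by $\asymp\log p$ using integer-size estimates, and you correctly flag the rational zeros as the obstacle --- but you do not overcome it, and it cannot be overcome along your lines without circularity. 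Concretely: since $b_{2m}=tb_m$, the map $M_0$ fixes $[0:1]$, so the return time is $1$ with probability $\tfrac12$; worse, genuine rational zeros give short returns with probability bounded below at \emph{every} excursion: for $t=-\tfrac12$ the three-step word corresponding to $1\to 2\to 5$ returns to $[0:1]$ for every prime $p$ (as $B_5(-\tfrac12)=1+2t=0$) with probability $\tfrac18$, and since excursions from $[0:1]$ are i.i.d.\ these bounded-length returns recur forever. To salvage $E[\tau^+]\gtrsim\log p$ you would need a quantitative upper bound, uniform over all scales up to $\log p$, on the frequency of words hitting the rational zero set $R_t$ --- but no such bound is known: the paper proves only that $R_t$ has \emph{lower} density $0$, and does so as a corollary \emph{of} this very lemma, while density $0$ is left as Conjecture 2.5. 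Your ``self-similarity induction'' is exactly this missing ingredient, so the proposal is circular at its self-declared crux. Secondary defects: even granting the sparseness, your size bound gives excursion length $\ell\geq\log p/\log(2q)$, whose constant is $\log 6>2$ for $t=-\tfrac13$, and the probability-$\tfrac12$ loop at $[0:1]$ costs another factor $2$, so the stated $2/\log p$ would not emerge from this bookkeeping.

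The paper sidesteps return times entirely. It shows the reachable states form one strongly connected component (the transition matrices have finite order in $GL_2(\mathbb{F}_p)$, so one can ``go backwards''), on which every vertex has in- and out-degree $2$; hence $A=\frac12 M$ is doubly stochastic and the Perron--Frobenius Ces\`aro limit of $A^k$ is the uniform matrix $\frac1K\mathbf{1}$. The averaged occupation of the bad states is therefore exactly $\#\{\text{reachable }(u,0)\}/K$, and a purely algebraic count finishes: each reachable $(u,0)$ spawns $\ord(t)$ distinct reachable states with second coordinate $u$, while $t=-1/q$ forces $q^{\ord(t)}\equiv\pm1\pmod p$ and so $\ord(t)\geq\log_q(p-1)\geq\frac12\log p-1$. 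Rational zeros need no separate treatment: if many bad states are reachable, proportionally many good ones are too. Your projective chain can be repaired in the same spirit: $M_0,M_1$ act as permutations of $\mathbb{P}^1(\mathbb{F}_p)$, so the Ces\`aro averages started from $[0:1]$ converge to the uniform measure on its component, and that component has size at least $\ord(t)$ because the points $M_1^j[0:1]=[\,1+t+\cdots+t^{j-1}:t^j\,]$ for $0\leq j<\ord(t)$ are pairwise distinct. Replacing your return-time estimate by this component-size count turns your sketch into essentially the paper's proof.
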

\begin{proof} We denote by $b_n$ the sequence $B_n( t)  \pmod p$. 
 Let us consider two operators $E_0,E_1 : (  \mathbb N \to \mathbb{F}_p)  \to (  \mathbb N \to \mathbb{F}_p) $ defined
by formulas $E_0( ( a_n) _{n \geq 0})  = ( a_{2n}) _{n \geq 0}$, and $E_1( ( a_n) _{n \geq 0})  = ( a_{2n+1}) _{n \geq 0}$.
Let us observe that the set $\{( \alpha b_n + \beta b_{n+1}) _{n \geq 0} | \alpha,\beta \in \mathbb{F}_p\}$ is closed under
action of $E_0,E_1$ because
\begin{equation*}
  E_0(  ( \alpha b_n + \beta b_{n+1}) )  =( \alpha b_{2n} + \beta b_{2n+1}) =
( ( t\alpha+\beta) b_n + \beta b_{n+1}) 
\end{equation*}
and 
\begin{equation*}
 E_1( ( \alpha b_n + \beta b_{n+1}) )  = ( \alpha b_{2n+1} + \beta b_{2n+2})  = ( \alpha b_{n} + ( \alpha+t\beta) b_{n+1}) .  
\end{equation*}
Therefore $( b_n) $ has a finite orbit under actions of $E_0,E_1$ so our sequence is $2$-automatic in sense of \cite{ALU}. We construct automaton with states labeled with $( \alpha,\beta) $ for each $\alpha,\beta \in \mathbb{F}_p$,
and edges $( \alpha,\beta)  \overset{0}{\rightarrow} ( t\alpha+\beta,\beta) $ and $( \alpha,\beta)  \overset{1}{\rightarrow} ( \alpha,t\beta+\alpha) $. The initial state is $( 1,0) $. For each state $( \alpha,\beta) $
we define its projection $\pi( \alpha,\beta)  = \beta$. One can observe that if $\bin( k)  = w_0w_1\ldots w_s$ then
$b_k =( ( E_{w_0} \circ E_{w_1} \circ \ldots \circ E_{w_s}) (  ( b_n)  ) ) _0$, so to compute $b_k$ we start at initial state
then go through the path $w_s w_{s-1} \ldots w_0$ and finally use projection function. 

We call a state reachable if it is reachable from initial state. Let us observe that 
the state $( 0,1) $ is reachable. We have
\begin{equation*}
 ( 1,0)  \overset{1}{\rightarrow} ( 1,1)  \overset{0}{\rightarrow} ( t+1,1)  \overset{0}{\rightarrow} ( t^2+t+1,1)  
\overset{0}{\rightarrow} \ldots \overset{0}{\rightarrow} ( t^{p-2}+\ldots+t+1,1) =( 0,1) . 
\end{equation*}
Now we can see that the state $( \alpha,\beta) $ is reachable if and only if the state $( \beta,\alpha) $
is reachable ( just go to $( 0,1) $ and then use opposite arrows ) . Now observe that if for some $u \in \mathbb{F}_p^*$
the state $( u,0) $ is reachable then the following different states are also reachable
\begin{equation*}
 ( 0,u)  \overset{0}{\rightarrow} ( u,u)  \overset{0}{\rightarrow} ( u( t+1) ,u)  \overset{0}{\rightarrow} 
( u( t^2+t+1) ,u) )  \overset{0}{\rightarrow} \ldots \overset{0}{\rightarrow} ( u( t^{\ord( t) -1}+\ldots+t+1) ,u) 
\end{equation*}
 Where $\ord( t) $ is the order of $t$ in group $\mathbb{F}_p^*$ and of course $\ord( t)  \geq \log_3 p -1  \geq \frac 12 \log p -1$. Hence
at most $\frac{2}{\log p}$ of reachable states are of the form $( u,0) $. Moreover from each reachable state there is a path to initial state.
Indeed, from state $( \alpha,\beta) $ we can go to $\begin{pmatrix} t & 1 \\ 0 & 1 \end{pmatrix} \begin{pmatrix} \alpha \\ \beta \end{pmatrix}$
or to $\begin{pmatrix} 1 & 0 \\ 1 & t \end{pmatrix} \begin{pmatrix} \alpha \\ \beta \end{pmatrix}$, those matrices has finite order in $GL_2( \mathbb{F}_p) $
so we can go backwards.

Let $M$ be adjacency matrix of strongly connected component of our automaton as a directed graph containing initial
vertex. Let $A=\frac 12 M$. Using a straightforward application of Frobenius-Perron theorem 
( see for example \cite[example 8.3.2 ,p.677]{Meyer})  we get that there is a limit
\begin{equation}\label{eqfrob}
 \lim_{k \to \infty} \frac{I+A+A^2+\ldots+A^k}{k+1} = G. 
\end{equation}
Of course $AG=GA=G$. As every vertex of our strongly connected component has inner and outer degree $2$, one can observe
that each element of the matrix $G$, namely $g_{s,t}$, is an arithmetic mean of $g_{s,t_0}$ and $g_{s,t_1}$ where 
$t \overset{0}{\rightarrow} t_0$ and $ t \overset{1}{\rightarrow} t_1$, and on the other hand an arithmetic mean of 
$g_{s_0,t}$ and $g_{s_1,t}$ where $s_0 \overset{0}{\rightarrow} s$ and $s_1 \overset{1}{\rightarrow} s$. Hence using 
strongly connectedness we get that 
\begin{equation*} G = \frac 1K   \begin{pmatrix} 1 & 1 & \cdots & 1 \\ 1 & 1 & \cdots & 1 \\
                                      \vdots & \vdots & \ddots & \vdots \\
					1 & 1 & \cdots & 1  
                                     \end{pmatrix},
\end{equation*}
where $K$ is the cardinality of our component. 

Number $d_{2^n,p}$ is the number of paths of length $n$ from initial state to one of states of the form
$( u,0) $ divided by $2^n$. We know that $A^n_{s_1,s_2}$ is the number of paths of length $n$ from $s_1$ to $s_2$,
divided by $2^n$. Therefore from equation  \eqref{eqfrob} we get that
\begin{equation*}
  \lim_{n \to \infty} \frac{d_{0,p}+d_{2^0,p}+d_{2^1,p}+\ldots+d_{2^n,p}}{n+1} = \frac{ \#\{ \text{ reachable states of the form } ( u,0) \} }{K} \leq \frac{2}{\log p}.
\end{equation*}
 
Our lemma is proved.  
\end{proof}
The above result implies the following

\begin{col} 
The sets $R_{- \frac 12}, R_{- \frac 13}$ has lower density $0$. 
\end{col}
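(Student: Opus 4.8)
The plan is to exploit the fact that a genuine rational zero of $B_n$ reduces to a zero modulo every prime not dividing the relevant denominator, so that $R_t$ is squeezed inside the mod-$p$ zero sets for all large primes, and then to feed the Cesàro bound from the Lemma into an estimate for the lower density.

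First I would fix $t \in \{-\frac{1}{2},-\frac{1}{3}\}$ and a prime $p>3$. Since every $B_n$ has integer coefficients and $t$ has denominator a power of $2$ (respectively $3$), the value $B_n(t)$ is a rational whose denominator is a power of $2$ (respectively $3$); if it equals $0$, then its reduction modulo $p$ is also $0$, because $p \nmid 2,3$. Consequently $R_t \subseteq \{k \geq 0 : B_k(t) \equiv 0 \pmod p\}$, and therefore
\[
\#(R_t \cap [0,2^j)) \leq \#\{0 \leq k < 2^j : B_k(t) \equiv 0 \pmod p\} = 2^j\, d_{2^j,p}
\]
for every $j$. In particular the density of $R_t$ within $[0,2^j)$ does not exceed $d_{2^j,p}$.

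Next I would extract information about $\liminf_j d_{2^j,p}$ from the Lemma. Because the Cesàro average of a sequence always lies between its lower and upper limits, the Lemma yields
\[
\liminf_{j \to \infty} d_{2^j,p} \leq \lim_{n \to \infty} \frac{d_{2^0,p}+\cdots+d_{2^n,p}}{n+1} \leq \frac{2}{\log p}.
\]
Finally I would combine the two facts. Since the lower density of $R_t$ is a $\liminf$ over all $n$, it is bounded above by the $\liminf$ taken over the dyadic subsequence $n=2^j$ (passing to a subsequence can only raise the $\liminf$), so that
\[
\liminf_{n \to \infty} \frac{\#(R_t \cap [1,n])}{n} \leq \liminf_{j \to \infty} \frac{\#(R_t \cap [1,2^j])}{2^j} \leq \liminf_{j \to \infty} d_{2^j,p} \leq \frac{2}{\log p}.
\]
As this holds for every prime $p>3$ and $\frac{2}{\log p} \to 0$ as $p \to \infty$, while the lower density is nonnegative, it must equal $0$, proving the claim for both $R_{-\frac12}$ and $R_{-\frac13}$.

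The computation is routine; the only point demanding care is the chain of inequalities connecting the three distinct notions of density: the Cesàro average from the Lemma, the $\liminf$ over the subsequence $(2^j)$, and the lower density over all $n$. I expect the main obstacle to be confirming that passing to the dyadic subsequence only increases the $\liminf$ (so the bound survives), and that the bounded discrepancy between $\#(R_t \cap [0,2^j))$ and $\#(R_t \cap [1,2^j])$ contributes a term of size $O(2^{-j})$ that vanishes in the limit.
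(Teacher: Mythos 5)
Your proposal is correct and follows essentially the same route as the paper: both use the inclusion $R_t \subseteq \{k : B_k(t) \equiv 0 \pmod p\}$ (equivalently, the paper's pointwise bound $d_n \leq d_{n,p}$, valid since $p \nmid 6$), feed the Lemma's Ces\`aro bound into the dyadic subsequence $(d_{2^j})$, and let $p \to \infty$ to force the lower density to $0$. The only cosmetic difference is the order of limits --- the paper sends $p \to \infty$ on the Ces\`aro averages of the $p$-free quantities $d_{2^j}$ before extracting $\liminf_j d_{2^j} = 0$, while you extract $\liminf_j d_{2^j,p} \leq \frac{2}{\log p}$ for each fixed $p$ first --- and these steps are interchangeable.
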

\begin{proof}
 Let $t$ be a fixed constant from the set $\{- \frac 12 , - \frac 13\}$. Let 
$d_{n} = \frac{ \# \{ 0 \leq k < n | B_k( t)  = 0 \} }{n}$. Of course $d_{n,p} \geq d_n$ for each $p$. 
From our lemma we get 
\begin{equation*}
\limsup_{n \to \infty} \frac{d_{2^0}+d_{2^1}+\ldots+d_{2^n}}{n+1} \leq  \frac{2}{\log p},
\end{equation*}
therefore \begin{equation*} \lim_{n \to \infty} \frac{d_{2^0}+d_{2^1}+\ldots+d_{2^n}}{n+1} = 0,\end{equation*}
and finally \begin{equation*} \liminf_{n \to \infty} d_n = 0. \end{equation*}  
\end{proof}

We expect that $R_{- \frac 12}, R_{- \frac 13}$ has upper density $0$. 
\begin{conj}
 The sets $R_{- \frac 12}, R_{- \frac 13}$ has density $0$. 
\end{conj}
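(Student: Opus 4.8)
The plan is to deduce the conjecture from a bound on the natural density of the mod-$p$ zero sets, reusing the automaton already built in the proof of the Lemma. Since $B_n(t)=0$ forces $B_n(t)\equiv 0\pmod p$ for every prime $p$ (the values $B_n(-\tfrac12),B_n(-\tfrac13)$ are rationals whose denominators are powers of $2$, resp.\ of $3$, so a zero value has zero numerator and is divisible by every other prime), for each prime $p>3$ one has $R_t\subset Z_p:=\{n:B_n(t)\equiv 0\pmod p\}$ and hence $\overline d(R_t)\le \overline d(Z_p)$, where $\overline d$ denotes upper density. As the Corollary already gives lower density $0$, it suffices to show $\overline d(Z_p)\to 0$ along some sequence of primes $p\to\infty$.

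First I would record the finer structure of the automaton. Every reachable state is reachable from $(1,0)$ and, by the backward-path argument in the Lemma, can also reach $(1,0)$; since $E_0,E_1$ are bijections of $\mathbb F_p^2$, each state has in- and out-degree exactly $2$. Thus the reachable part is a single strongly connected, $2$-in-$2$-out-regular digraph on $K$ vertices, its adjacency matrix $M$ has Perron value $2$, and $A=\tfrac12 M$ is doubly stochastic. Writing $\mathbf 1_Z$ for the indicator vector of the reachable states of the form $(u,0)$, the Lemma's computation is exactly the statement that the Cesàro averages of $A^n$ converge to $\tfrac1K J$ (with $J$ the all-ones matrix), so $(G\mathbf 1_Z)_s=\rho_p:=\#\{(u,0)\}/K\le 2/\log p$ for every state $s$.

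Next I would pass from powers of two to arbitrary $N$ through the summatory function $f_p(N)=\#\{0\le k<N:B_k(t)\equiv 0\pmod p\}$. Expanding $N$ in base two and splitting $[0,N)$ into the dyadic blocks attached to the nonzero binary digits of $N$, one gets $f_p(N)=\sum_{i}2^{i}\left(A^{i}\mathbf 1_Z\right)_{\phi_i}$ (after the routine bookkeeping of reading directions), where $\phi_i$ is the state reached after reading the leading digits and the sum runs over the positions $i$ of the ones of $N$. This is a convex combination of the numbers $\left(A^{i}\mathbf 1_Z\right)_{\phi_i}\in[0,1]$ with weights $2^{i}/N$, and for large $N$ all but a vanishing fraction of the weight sits on blocks with $i$ large. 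Consequently $\overline d(Z_p)\le \limsup_{i\to\infty}\max_s\left(A^{i}\mathbf 1_Z\right)_s$. If the strongly connected component is aperiodic, then $A^{i}\to\tfrac1K J$ and this $\limsup$ equals $\rho_p\le 2/\log p$; letting $p\to\infty$ would then finish the proof.

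The main obstacle is precisely the step hidden in the words ``if the component is aperiodic''. When the component has period $h=h_p>1$ the powers $A^{i}$ do not converge but cluster around $h$ distinct limit matrices indexed by $i\bmod h$, and the vertices split into periodicity classes $V_0,\dots,V_{h-1}$; the argument above then only yields $\overline d(Z_p)\le \max_r \#\{(u,0)\in V_r\}/\#V_r$, which is at most $h_p\,\rho_p$. Thus the whole difficulty is to control this oscillation uniformly as $p$ grows --- either by proving the reachable component is aperiodic (note that the obvious self-loops, on states $(\alpha,(1-t)\alpha)$, are in fact absorbing and hence unreachable, so aperiodicity must be extracted from genuine cycles), or by bounding the period $h_p$, or more generally by understanding the eigenvectors of $M$ for the eigenvalues of modulus $2$ together with the distribution of the states $(u,0)$ among the classes $V_r$, so that $h_p\rho_p\to 0$ along infinitely many primes. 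This is exactly the gap between the logarithmic density estimate of the Lemma (which already yields lower density $0$) and the genuine natural density demanded by the conjecture, and it is the reason the statement is left open here.
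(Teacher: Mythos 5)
This statement is Conjecture~2.6 of the paper: the author explicitly leaves it open (``We expect that \ldots''), proving only the weaker Corollary that $R_{-\frac12}$, $R_{-\frac13}$ have \emph{lower} density $0$, via the Ces\`aro averages of $d_{2^i,p}$ in the Lemma. So your proposal cannot be matched against a proof in the paper, and indeed it is not a proof: you reconstruct exactly the paper's machinery (the inclusion $R_t\subset Z_p$, the $p^2$-state automaton, the doubly stochastic matrix $A=\frac12 M$, the Perron--Frobenius limit $G=\frac1K J$ with $(G\mathbf 1_Z)_s=\rho_p\le 2/\log p$), and you correctly diagnose the genuine gap: the Lemma only controls the Ces\`aro average of $(A^i\mathbf 1_Z)$, whereas an upper-density bound $\overline d(Z_p)\le\rho_p$ requires convergence of $A^i$ itself, i.e.\ aperiodicity of the reachable strongly connected component (or at least a bound on its period $h_p$ together with equidistribution of the states $(u,0)$ among the periodicity classes), uniformly as $p\to\infty$. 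That is precisely the missing idea; since you stop there, no proof of the conjecture should be credited, though your analysis of \emph{why} it is open is accurate and aligned with the paper's framework.

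Two concrete corrections to your text. First, the parenthetical claim that the self-loop states $(\alpha,(1-t)\alpha)$ are ``absorbing and hence unreachable'' is false for $\alpha\neq 0$: such a state carries a $0$-self-loop but its $1$-edge goes to $(\alpha,\alpha+t(1-t)\alpha)$, which differs from it unless $t=2$ in $\mathbb F_p$ (only $(0,0)$ is absorbing). So reachability of a self-loop state is not excluded by your remark; on the contrary, if any such state lies in the reachable component, strong connectedness makes the component aperiodic at once, so this is a natural point of attack rather than a dead end. Second, your block formula $f_p(N)=\sum_i 2^i\left(A^i\mathbf 1_Z\right)_{\phi_i}$ has the reading direction reversed: the paper's automaton consumes $\bin(k)$ least-significant-bit first, so in the dyadic block attached to a digit of $N$ the $i$ free bits are read \emph{first} and the fixed high bits last; the correct quantity is $\left(A^i\,\mathbf 1_{\sigma_i^{-1}(Z)}\right)_{(1,0)}$, where $\sigma_i$ is the bijection of the component induced by the fixed suffix. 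Since the transition maps are invertible, $\#\sigma_i^{-1}(Z)=\#Z$, so in the aperiodic case your conclusion $\overline d(Z_p)\le\rho_p$ survives this bookkeeping --- but the periodic case, as you say, remains the unresolved obstruction.
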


\end{section}

\begin{section}{Consecutive polynomials with equal degrees}
In this section we will characterize those $n$ for which $e( n) =e( n+1) $. Resolving conjecture posted in \cite[Conjecture 6.3]{UL1}. 
We also characterize those $n$ for which $e( n) =e( n+1) =e( n+2) $. We recall the recurrence satisfied be the
sequence $( e_n) _{n \in \mathbb{N}}$ of degrees of Stern Polynomials   
\begin{equation*}
\begin{cases}
  e( 0) =1 \\
  e( 2n)  = e( n) +1 \\
  e( 4n+1)  = e( n) +1 \\
  e( 4n+3)  = e( n+1) +1 \\
 \end{cases}.
\end{equation*} 
Let us the define sequences $( p_n) _{n \mathbb{N}},( q_n) _{n \mathbb{N}}$ as follows 
\begin{equation*}
 p_n = \frac{4^n-1}{3}, q_n = \frac{5 \cdot 4^n -2}{3}, n \geq 0.
\end{equation*}

\begin{thm} We have the following equality
\begin{equation*}
\{n | e( n) =e( n+1) \} = \{2^{2k+1}n +p_k | k,n \in \mathbb{N}_+ \} \cup \{ 2p_k | k \in \mathbb{N}_+ \} \cup 
\{ 2^{2k+1}n + q_k | k,n \in \mathbb{N}, k>0 \}.
\end{equation*} 
 
\end{thm}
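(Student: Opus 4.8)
The plan is to study the first difference $\delta(n) := e(n+1) - e(n)$ and to identify $\{n : e(n)=e(n+1)\}=\{\delta(n)=0\}$ as a set governed by a simple rule on the base-$4$ digits of $n$. First I would record that, since the $B_n$ have nonnegative coefficients, no cancellation occurs in $B_{2n+1}=B_n+B_{n+1}$, so $e(2n+1)=\max(e(n),e(n+1))$; combining this with $e(2n)=e(n)+1$ and the recurrence displayed above, a direct computation yields the four transition formulas
\begin{align*}
\delta(4n) &= -1, & \delta(4n+1) &= \max(0,\delta(n)), \\
\delta(4n+2) &= \min(0,\delta(n)), & \delta(4n+3) &= 1 .
\end{align*}
An immediate induction then gives $\delta(n)\in\{-1,0,1\}$ for all $n\ge 1$, with base cases $\delta(1)=1$, $\delta(2)=0$, $\delta(3)=1$. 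Writing $P=\{\delta=1\}$, $Z=\{\delta=0\}$ and $M=\{\delta=-1\}$, these formulas determine the class of $n$ by reading its base-$4$ digits from the least significant end.

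Next I would convert the transitions into a statement about base-$4$ digit strings. Reading the lowest digit $r\in\{0,1,2,3\}$ of $n$ and recursing on $\lfloor n/4\rfloor$: the digit $r=0$ forces $n\in M$ and $r=3$ forces $n\in P$ unconditionally; the digit $r=1$ keeps $n$ in $P$ or sends it to $Z$ according as $\lfloor n/4\rfloor\in P$ or $\notin P$, and symmetrically $r=2$ keeps $n$ in $M$ or sends it to $Z$ according as $\lfloor n/4\rfloor\in M$ or not. Unrolling this recursion gives closed descriptions: $n\in M$ iff the base-$4$ expansion of $n$ ends in a $0$ preceded only by $2$'s (a block $2^{\,j}0$, $j\ge 0$), $n\in P$ iff it ends in a block $1^{\,j}3$ or consists entirely of $1$'s, and $Z$ is everything else. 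Concretely, $Z$ is exactly the set of $n$ whose base-$4$ expansion either (i) ends in a maximal block $\underbrace{1\cdots1}_{k}$ with $k\ge 1$ preceded by an even digit $0$ or $2$; or (ii) equals $\underbrace{2\cdots2}_{k}$ with $k\ge 1$; or (iii) ends in a maximal block $\underbrace{2\cdots2}_{k}$ with $k\ge 1$ preceded by an odd digit $1$ or $3$.

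Finally I would match these three shapes to the three families, using the base-$4$ identities $p_k=\underbrace{1\cdots1}_{k}$, $2p_k=\underbrace{2\cdots2}_{k}$ and $q_k=1\underbrace{2\cdots2}_{k}$ together with $2^{2k+1}=2\cdot 4^k$. Since $2n$ is even and $2n+1$ is odd, the number $2^{2k+1}n+p_k=(2n)\cdot 4^k+p_k$ is precisely a base-$4$ string whose last $k$ digits are $1$, preceded by the even-ending expansion of $2n$, giving case (i); $2p_k$ gives case (ii); and $2^{2k+1}n+q_k=(2n+1)\cdot 4^k+2p_k$ gives case (iii), with $n=0$ producing the bare string $q_k$. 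Conversely every string of type (i)--(iii) arises this way from unique parameters, so the three families are pairwise disjoint and their union is exactly $Z$, which is the claimed equality. The main obstacle is the bookkeeping in this last step: one must check that the maximal trailing block of equal digits has length exactly $k$ (which uses that the preceding digit has opposite parity and so cannot extend the block), verify that the parametrisation is a bijection, and handle the boundary cases ($n=0$ in the $q_k$ family, $n\ge 1$ in the $p_k$ family, and the small indices $n=1,2,3$) against the directly computed values of $\delta$.
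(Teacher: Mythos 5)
Your proposal is correct, and it takes a genuinely different route from the paper's proof. The paper works directly with binary expansions: it splits $n$ into eight cases according to trailing binary patterns ($4k$, $4k+3$, $\overline{\bin( m) 0( 01) ^k}$, $\overline{\bin( m) 11( 01) ^k}$, $\overline{( 01) ^k}$, $\overline{\bin( m) 00( 10) ^k}$, $\overline{( 10) ^k}$, $\overline{\bin( m) 1( 10) ^k}$), telescopes the recurrence for $e$ separately in each case to compute $e( n) $ and $e( n+1) $, and then notes that the three cases yielding equality are exactly the three claimed families. You instead isolate the key lemma $e( 2n+1) =\max( e( n) ,e( n+1) ) $ (justified by nonnegativity of coefficients, hence no cancellation in $B_{2n+1}=B_n+B_{n+1}$), derive the closed transition system $\delta( 4n) =-1$, $\delta( 4n+1) =\max( 0,\delta( n) ) $, $\delta( 4n+2) =\min( 0,\delta( n) ) $, $\delta( 4n+3) =1$ for the first difference $\delta( n) =e( n+1) -e( n) $, and read off the level set $\{\delta=0\}$ from base-$4$ digits; since base-$4$ digits are pairs of binary digits, your trailing blocks $1^k$ and $2^k$ correspond exactly to the paper's $( 01) ^k$ and $( 10) ^k$, so the underlying combinatorics is the same. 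What your organization buys: a single three-state recursion replaces eight telescoping computations, you obtain the full trichotomy $\delta\in\{-1,0,1\}$ with simultaneous descriptions of all three level sets (from which the paper's subsequent theorem on $e( n) =e( n+1) =e( n+2) $ would also drop out, since $\{n:\delta( n) =\delta( n+1) =0\}$ becomes computable by the same digit analysis), and the matching of the three digit shapes to the families via $p_k=\underbrace{1\cdots1}_{k}$, $2p_k=\underbrace{2\cdots2}_{k}$, $q_k=1\underbrace{2\cdots2}_{k}$ in base $4$ is clean and checks out, including the boundary parameters ($n\geq 1$ forced in the $p_k$ family because leading zeros are excluded, $n\geq 0$ allowed in the $q_k$ family). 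One slip of wording: in your intermediate descriptions of $M$ and $P$ the blocks written $2^{\,j}0$ and $1^{\,j}3$ should, in most-significant-digit-first order, be $0\,2^{\,j}$ and $3\,1^{\,j}$ (the unconditional digits $0$ and $3$ are the \emph{least} significant ones); your final characterization (i)--(iii) of $Z$, phrased with ``preceded by,'' is stated correctly, so this does not affect the argument.
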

\begin{proof} We will use binary representations of integers. We write $\bin( m) $ for sequence of digits  of $m$ in base two.
Let us observe that $\bin( p_k)  = 1( 01) ^{k-1},$ and $ \bin( q_k)  = 1( 10) ^k$ for each $k>0$. We consider eight cases 
depending on the binary expansion of $n$.
\begin{enumerate}[(i)]
 \item If $n = 4k$ then we have
 \begin{equation*}
  e( n)  = e( 4k)  = e( 2k) +1 = e( k) +2 
 \end{equation*}
 and 
 \begin{equation*}
  e( n+1)  = e( 4k+1)  = e( k) +1.
 \end{equation*}
 In particular $e( n)  \neq e( n+1) $ in this case.

 \item If $n = 4k+3$ then we have
  \begin{equation*}
e( n) =e( 4k+3) =e( k+1) +1
 \end{equation*}

 and 
\begin{equation*}
e( n+1) =e( 4k+4) =e( k+1) +2.
 \end{equation*}
We thus get $e( n)  \neq e( n+1) $.
 \item If $n=\overline{\bin( m) 0( 01) ^k}$, where $m,k \in \mathbb{N}_+$ then we have
\begin{equation*}
e( n)  = e( \overline{\bin( m) 0( 01) ^k})  = e( \overline{\bin( m) 0}) +k=e( m) +k+1
\end{equation*}
 and 
 \begin{align*}
  e( n+1)  &= e( \overline{\bin( m) 0( 01) ^{k-1}10})  = e( \overline{\bin( m) 00( 10) ^{k-2}11}) +1 \\
&= e( \overline{\bin( m) 00( 10) ^{k-3}11}) +2 = \ldots = e( \overline{\bin( m) 01}) +k =e( m) +k+1. \\ 
 \end{align*}
Our computation implies $e( n)  = e( n+1) $.

\item If $n= \overline{\bin( m) 11( 01) ^k}$, where $k,m \in \mathbb{N}$, $k>0$, then we have
\begin{equation*}
e( n)  = e( \overline{\bin( m) 11( 01) ^k}) =k+e( \overline{\bin( m) 11}) =k+1+e( \overline{\bin( m+1) })  
\end{equation*}
and
\begin{align*}
e( n+1)  &= e( \overline{\bin( m) 11( 01) ^{k-1}10}) =e( \overline{\bin( m) 1( 10) ^{k-1}11}) +1=e( \overline{\bin( m) 1( 10) ^{k-2}11}) +2 \\
&= e( \overline{\bin( m) 111}) +k = e( \overline{\bin( m+1) 0}) +k+1=e( \overline{\bin( m+1) }) +k+2. \\
\end{align*}

We get $e( n)  \neq e( n+1) $ in this case.

 \item If $n=\overline{( 01) ^k}$, where $k \in \mathbb{N}_+$, then we have
\begin{equation*}e( n)  = e( \overline{( 01) ^k})  = e( \overline{1}) +k-1=k-1 \end{equation*}
and
\begin{align*}
 e( n+1)  &= e( \overline{( 01) ^{k-1}10})  = e( \overline{( 01) ^{k-1}1}) +1=e( \overline{( 01) ^{k-2}011}) +1 \\
&= e( \overline{( 01) ^{k-2}1}) +2) =e( \overline{11}) +k-1= k, \\   
\end{align*}
and thus $e( n)  \neq e( n+1) $.

\item If $n = \overline{\bin( m) 00( 10) ^k}$, where $m,k \in \mathbb{N}_+$, then we have
\begin{equation*}
   e( n)  = e( \overline{\bin( m) 00( 10) ^k})  = k+2+e( \overline{\bin( m) })  
\end{equation*}
and 
\begin{equation*}
 e( n+1)  = e( \overline{\bin( m) 0( 01) ^k1})  = e( \overline{\bin( m) 0( 01) ^{k-1}1}) +1 = e( \overline{\bin( m) 01}) +k=
 e( \overline{\bin( m) }) +k+1.
\end{equation*}
Similarly as in the previous case we get $e( n)  \neq e( n+1) $.

\item If $n = \overline{( 10) ^k}$ where $k \in \mathbb{N}_+$, then we have
\begin{equation*}
e( n) =e( \overline{( 10) ^k}) =e( \overline{( 01) ^{k}}) +1=e( 1) +k=k  
\end{equation*}
and
\begin{equation*}
    e( n+1)  = e( \overline{( 10) ^{k-1}11})  = e( \overline{( 10) ^{k-2}11}) +1=e( \overline{11}) +k-1=k, 
\end{equation*}
and thus $e( n)  = e( n+1) $.

\item If $n = \overline{\bin( m) 1( 10) ^k}$ where $m,k \in \mathbb{N}$, $k>0$, then we have 
\begin{equation*}
e( n)  = e( \overline{\bin( m) 1( 10) ^k})  = e( \overline{\bin( m) 11( 01) ^{k-1}}) +1 = e( \overline{\bin( m+1) }) +k+1 
\end{equation*}

and

\begin{align*}
  e( n+1)  &= e( \overline{\bin( m) 1( 10) ^{k-1}11})  = e( \overline{\bin( m) 1( 10) ^{k-2}11}) +1  \\
&= e( \overline{\bin( m) 111}) +k-1 = e( \overline{\bin( m+1) }) +k+1. \\
\end{align*}
and thus $e( n)  = e( n+1) $. 

\end{enumerate}

All numbers of the form $4k+1$ can be written uniquely in one of the forms $( iii) ,( iv) ,( v) $, and all numbers of the
form $4k+2$ can be can be written uniquely in one of the forms $( vi) ,( vii) ,( viii) $. Therefore all cases were considered. In cases $( iii) ,( vii) ,( viii) $ we have that $e( n) =e( n+1) $, and one can observe that
the numbers in this cases are exactly the elements of the set $\{2^{2k+1}n +p_k | k,n \in \mathbb{N}_+ \} \cup \{ 2p_k | k \in \mathbb{N}_+ \} \cup 
\{ 2^{2k+1}n + q_k | k,n \in \mathbb{N}, k>0 \}$. So our theorem is proved. 
\end{proof}

In the next theorem we characterize the set of $n$ for which $e( n)  = e( n+1)  = e( n+2) $.  

\begin{thm} We have the following equality
 \begin{multline*}
\{n | e( n) =e( n+1) =e( n+2)  \} = \\ = \{2^{2k+1}n +p_k | k,n \in \mathbb{N}_+,k \geq 2 \} \cup \{ 2p_k-1 | k \geq 2 \} \cup 
\{ 2^{2k+1}n + q_k-1 | k,n \in \mathbb{N}, k \geq 2 \}.
\end{multline*} 
\end{thm}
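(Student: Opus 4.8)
The plan is to deduce this from the previous theorem by writing
$$\{n\mid e(n)=e(n+1)=e(n+2)\}=S\cap(S-1),\qquad S:=\{n\mid e(n)=e(n+1)\}.$$
First I would extract from the eight-case analysis of the previous theorem exactly what it says about residues modulo $4$: cases (i) and (ii) show that $S$ contains no integer $\equiv 0$ or $\equiv 3\pmod 4$; case (iii) shows $S\cap(4\mathbb{N}+1)=\{2^{2k+1}m+p_k\mid k,m\in\mathbb{N}_+\}$ (call this family A); and cases (vii),(viii) show $S\cap(4\mathbb{N}+2)=\{2p_k\mid k\in\mathbb{N}_+\}\cup\{2^{2k+1}m+q_k\mid k\in\mathbb{N}_+,m\in\mathbb{N}\}$ (families B and C). Everything below rests on this partition.

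The key structural point is a parity constraint. If $n\in S$ and $n\equiv 2\pmod4$, then $n+1\equiv 3\pmod4$, and since $S$ meets no class $\equiv 3\pmod4$ we get $n+1\notin S$; thus no such $n$ can lie in $S\cap(S-1)$. Hence every element of the triple set satisfies $n\equiv 1\pmod4$ and so lies in family A, $n=2^{2k+1}m+p_k$ with $k,m\geq1$. In particular $n+1\equiv 2\pmod4$, so $n$ is a triple element if and only if $n+1$ lies in family B or family C. This is the crucial simplification: instead of re-running a binary case analysis for three consecutive degrees, I only need to decide, for each member of family A, whether its successor solves $n+1=2p_j$ or $n+1=2^{2j+1}m'+q_j$.

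I would then split on the index $k$ of $n$. For $k\geq2$ the successor always lands in family C with inner index $1$: since $2^{2k+1}m\equiv 0\pmod 8$ and $p_k\equiv 5\pmod 8$ for $k\geq2$, we get $n+1\equiv 6\equiv q_1\pmod 8$, whence $n+1=2^3m'+q_1$ with $m'=(n+1-6)/8\geq0$, so $n+1\in S$. This yields the family $\{2^{2k+1}n+p_k\mid k\geq2,\ n\in\mathbb{N}_+\}$. For $k=1$ we have $n=8m+1$ and must place $8m+2$ in family B or C. Membership in family B gives $4m+1=p_j$, i.e. $m=p_{j-1}$ and $n=2p_j-1$ with $j\geq2$; membership in family C is impossible for $j=1$ (there $q_1\equiv 6$ while $8m+2\equiv 2\pmod8$) and for $j\geq2$ it is always solvable because $q_j\equiv 2\pmod8$, giving $n=2^{2j+1}m'+q_j-1$. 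These are precisely the sets $\{2p_k-1\mid k\geq2\}$ and $\{2^{2k+1}n+q_k-1\mid k\geq2,\ n\in\mathbb{N}\}$.

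Finally I would verify disjointness and completeness: the family-A triples are $\equiv 5\pmod 8$ whereas the other two families are $\equiv 1\pmod8$, and families B and C are themselves disjoint, so the three listed sets partition the triple set exactly. The main obstacle is the case $k=1$, where the trailing $(10)$-run of $n+1=\overline{\bin(m)010}$ depends delicately on the low bits of $m$, so a direct pattern match is awkward; the device that removes this difficulty is to pass to $S\cap(4\mathbb{N}+2)=$ (family B $\cup$ family C) and solve the elementary conditions $8m+2=2p_j$ and $8m+2=2^{2j+1}m'+q_j$, using only the residues $p_k\equiv 5$, $q_k\equiv 2\pmod 8$ for $k\geq2$, and $q_1\equiv 6\pmod 8$.
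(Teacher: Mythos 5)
Your proposal is correct and takes essentially the same approach as the paper: both reduce, via the previous theorem and residues modulo $4$, to determining when an element of $A=\{2^{2k+1}m+p_k\}$ has its successor in $B\cup C$, and then resolve this by elementary congruence analysis (the paper clears denominators and reduces the resulting Diophantine equations modulo $4$ to force $k=1$ or $l=1$, while you equivalently split on the index $k$ and use $p_k\equiv 5$, $q_k\equiv 2\pmod 8$). The differences are purely organizational, not mathematical.
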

\begin{proof} 
Let $A= \{2^{2k+1}n +p_k | k,n \in \mathbb{N}_+ \} $, $B = \{ 2p_k | k \in \mathbb{N}_+ \}$, and
$C = \{ 2^{2k+1}n + q_k | k,n \in \mathbb{N}, k>0 \}$. We want to compute $( A \cup B \cup C)  \cap ( ( A \cup B \cup C) +1) $. 
But as every element of $A$ equals $1 \pmod 4$ and every element of $B \cup C$ equals $2 \pmod 4$, it is enough to compute
$( ( A+1)  \cap B)  \cup ( ( A+1)  \cap C) $. First let us compute $( A+1)  \cap B$, we have
\begin{equation*}
 2^{2k+1}n+p_k+1 = 2p_l, \text{ for } n,k,l \geq 1.
\end{equation*}
By a simple computations we get
\begin{equation*}
 3 \cdot 2^{2k+1}n+4^k = 2 \cdot 4^l-4.
\end{equation*}
Looking modulo $ 4$ we get that $k=1$ or $l=1$. When $k=1$ then our equality became $12n+4 = 4^l$. For each $l$ we have exactly one $n$
such that this equality holds, but as $n \geq 2$ we have $l \geq 2$. 
If $l=1$ then our equality became $2^{2k+1}n+p_k+1=2$ which contradicts $n,k \geq 1$. Summarizing $( A+1)  \cap B = \{ 2p_l-1 | l \geq 2 \}$.
Let us compute $( A+1)  \cap C$. We have
\begin{equation*}
 2^{2k+1}n+p_k+1 =2^{2l+1}m+q_l, \text{ for } n,k,l \geq 1, m \geq 0. 
\end{equation*}
By a simple computations we get
\begin{equation*}
2^{2k+1} \cdot 3n+4^k = 2^{2l+1} \cdot 3m + 5 \cdot 4^l-4.
\end{equation*}
Looking $\pmod 4$ we get that $k=1$ or $l=1$. In the first case we get $8n+2 = 2^{2l+1}m+q_l$. For each $l,m$ such that $l \geq 2$ we have
exactly one $n$ such that this equality holds. So we get $\{ 2^{2l+1}m + q_l-1 | m,l \in \mathbb{N}, l \geq 2 \}$.
In the second case we get  $\{2^{2k+1}n +p_k | k,n \in \mathbb{N}_+,k \geq 2 \}$ in the same way. Summarizing we get
$( A+1)  \cap C =  \{ 2^{2l+1}m + q_l-1 | m,l \in \mathbb{N}, l \geq 2 \} \cup \{2^{2k+1}n +p_k | k,n \in \mathbb{N}_+,k \geq 2 \}$.
And our result follows.
\end{proof}
\end{section}

\begin{section}{Reciprocal Stern polynomials}

Let us recall that the polynomial $B_n( t) $ is called reciprocal if $B_n( t) =t^{e( n) }B_n( \frac 1t) $. 
It was observed in \cite{UL1} that for each $n$ polynomials $B_{2^n-1}( t) $ and $B_{2^n-5}( t) $ are reciprocal.  
We define sequences $( u_n) _{n \in \mathbb{N}}$ and $( v_n) _{n \in \mathbb{N}}$ as follows
\begin{equation*}
\begin{cases} 
 u_0 = 1 & u_n = 2^{8n-2} u_{n-1}+2^{4n}-1, \\ 
v_0 = 5 & v_n = 2^{8n+2}v_{n-1}-2^{4n+2}+1,
 \end{cases}
\end{equation*}
and show that whenever $2^k>u_n$ (or $2^k>v_n$)  then $B_{2^k-u_n}$ (or $B_{2^k-v_n}$)  is reciprocal.

% deg( b( 2^n-v_k) =n-2k
% deg( b( 2^n-u_k) =n-2k+1
\begin{thm} 
For each integer $n \geq 0 $ all polynomials in the sequences $( B_{2^k-v_n}) _{k \geq 4n^2+6n+3}$, and 
$( B_{2^k-u_n}) _{k \geq 4n^2+2n+1}$ are reciprocal. 
\end{thm}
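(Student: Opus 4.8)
The plan is to convert the recursions for $u_n,v_n$ into statements about binary words and then induct on $n$ through the transfer-matrix description of the Stern polynomials. First I would unwind the recursions digit by digit. One checks that $\bin(u_n)=\bin(u_{n-1})\,0^{4n-2}1^{4n}$, while the single borrow produced by the subtraction gives $\bin(v_n)=\bin(v_{n-1}-1)\,1^{4n}0^{4n+1}1$; in particular $u_n$ has length $4n^2+2n+1$ and $v_n$ has length $4n^2+6n+3$, so the thresholds in the statement are exactly these bit lengths, and the hypothesis $k\ge 4n^2+2n+1$ just says that $2^k-u_n$ is represented by $1^{k-\ell}$ followed by the $\ell$-bit word $2^\ell-u_n$. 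The engine of the induction is the self-similar identity $2^k-u_n=2^{8n-2}(2^{k'}-u_{n-1})-(2^{4n}-1)$ with $k'=k-(8n-2)$, together with its no-borrow analogue $2^k-v_n=2^{8n+2}(2^{k'}-v_{n-1})+(2^{4n+2}-1)$, $k'=k-(8n+2)$. A short computation shows that $k\ge 4n^2+2n+1$ is equivalent to $k'\ge 4(n-1)^2+2(n-1)+1$ (and similarly for $v$), so the level $n-1$ hypothesis is available.

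Next I would record the transfer matrices $L=\left(\begin{smallmatrix}t&0\\1&1\end{smallmatrix}\right)$ and $R=\left(\begin{smallmatrix}1&1\\0&t\end{smallmatrix}\right)$, which satisfy $\binom{B_{2m}}{B_{2m+1}}=L\binom{B_m}{B_{m+1}}$ and $\binom{B_{2m+1}}{B_{2m+2}}=R\binom{B_m}{B_{m+1}}$, so that reading $\bin(m)$ after its leading $1$ writes $\binom{B_m}{B_{m+1}}$ as a word in $L,R$ applied to $\binom1t$. Under this dictionary the self-similar identities say that $\binom{B_{2^k-u_n-1}}{B_{2^k-u_n}}$ is obtained from the level $n-1$ pair by the fixed word $L^{4n}R^{4n-2}$, and $\binom{B_{2^k-v_n}}{B_{2^k-v_n+1}}$ by the fixed word $R^{4n+2}L^{4n}$; crucially these words depend only on $n$, not on $k$, so all $k$-dependence sits in the base pair. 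To detect reciprocity I would introduce $F_m=t^{e(m)}B_m(1/t)$, verify the recursions $F_{2m}=F_m$ and $F_{2m+1}=t^{a}F_m+t^{b}F_{m+1}$ with $a=\max(e(m),e(m+1))-e(m)$ and $b=\max(e(m),e(m+1))-e(m+1)$, and note that reciprocity of $B_N$ is precisely the identity $B_N=F_N$.

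Then I would prove the theorem by induction on $n$. The base cases $u_0=1$ and $v_0=5$ are the two known reciprocal families $B_{2^k-1}=1+t+\dots+t^{k-1}$ and $B_{2^k-5}$. For the step I would feed the level $n-1$ pair, one coordinate of which is reciprocal by hypothesis, through the explicit block word and check that the distinguished coordinate of the output pair (namely $B_{2^k-u_n}$, respectively $B_{2^k-v_n}$) again satisfies $B_N=F_N$. Because the block word is fixed, its action on the two polynomials of the pair is governed by a fixed finite scheme, and the special shapes of the blocks $1^{4n-2}0^{4n}$ and $0^{4n}1^{4n+2}$ are exactly what make the reversal symmetry reappear.

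I expect the main obstacle to be the degree bookkeeping forced by the fact that reciprocity is a symmetry of a single polynomial while the matrices act on the pair $(B_m,B_{m+1})$, whose two entries have different, $k$-dependent degrees. The exponents $a,b$ in the recursion for $F$ measure exactly this degree mismatch, and one must show they cancel so that no stray power of $t$ spoils the palindrome. The clean way to close the induction is therefore to carry a strengthened hypothesis recording the whole pair (both polynomials, their degrees, and their reversals $F$), not merely the bare fact that one coordinate is reciprocal, and to verify that the two fixed block words propagate this enriched data in a $k$-uniform way. Identifying the correct invariant and confirming that the specific blocks preserve it is where the real work lies; the recursions defining $u_n$ and $v_n$ are, in effect, reverse-engineered to be precisely the blocks that do.
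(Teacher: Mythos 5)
Your framework is sound, and it is essentially the paper's own proof in matrix clothing: the paper also inducts on $n$ by peeling the binary blocks $0^{4n-2}1^{4n}$ and $0^{4n+1}1$ with the Stern recurrences, which is exactly the action of your words $L^{4n}R^{4n-2}$ and $R^{4n+2}L^{4n}$. Your preliminary claims all check out: the expansions $\bin(u_n)=\bin(u_{n-1})0^{4n-2}1^{4n}$, the bit lengths $4n^2+2n+1$ and $4n^2+6n+3$, the self-similar identities with their threshold bookkeeping, and the recursion $F_{2m}=F_m$, $F_{2m+1}=t^aF_m+t^bF_{m+1}$ (the latter using $e(2m+1)=\max(e(m),e(m+1))$, valid since the coefficients are non-negative). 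But the proposal stops exactly where the theorem lives: you never close the induction step, and you say so explicitly (``identifying the correct invariant \ldots is where the real work lies''). That is a genuine gap, and your proposed remedy --- an unspecified strengthened hypothesis carrying the whole enriched pair --- points away from the actual mechanism, which requires no new invariant at all.

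The missing idea is that the two families interlock, so the theorem's own two-family statement is already the correct induction hypothesis. Since $v_{n-1}=2^{4n-2}u_{n-1}+1$, applying the $R^{4n-2}$ portion of your block to the pair $(B_{M-1},B_M)$ with $M=2^{k-8n+2}-u_{n-1}$ produces in the first coordinate precisely $B_{2^{4n-2}M-1}=B_{2^{k-4n}-v_{n-1}}$; this yields the paper's key identity
\begin{equation*}
B_{2^k-u_n}(t)=t^{4n-2}B_{2^{k-8n+2}-u_{n-1}}(t)+\left(1+t+\cdots+t^{4n-1}\right)B_{2^{k-4n}-v_{n-1}}(t),
\end{equation*}
a combination of two polynomials both reciprocal by the joint hypothesis, with thresholds matching exactly ($k\geq 4n^2+2n+1$ iff $k-4n\geq 4(n-1)^2+6(n-1)+3$). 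Dually, $u_n+1=2^{4n}v_{n-1}$ gives $B_{2^k-v_n}=t^{4n}B_{2^{k-8n-2}-v_{n-1}}+\left(1+t+\cdots+t^{4n+1}\right)B_{2^{k-4n-2}-u_n}$, using the $u$-family at level $n$ just established and the $v$-family at level $n-1$; the paper's base computation for $B_{2^k-5}$ is the $n=0$ instance. With these cross-family identifications, your degree worry is discharged not abstractly through the exponents $a,b$ but by the explicit formulas $e(2^k-u_n)=k-2n-1$ and $e(2^k-v_n)=k-2n-2$ (which themselves need a small induction that your sketch also presupposes but does not supply): substituting $t\mapsto t^{-1}$ and multiplying by the appropriate power of $t$, the exponents $4n-2+(k-10n+3)=k-6n+1$ and $k-6n$ recombine so that $t^{k-2n-1}$ factors out, giving $B_{2^k-u_n}(t)=t^{e(2^k-u_n)}B_{2^k-u_n}(t^{-1})$. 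Without these identifications your claim that ``the special shapes of the blocks are exactly what make the reversal symmetry reappear'' is a restatement of the problem rather than a proof of it.
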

\begin{proof}
Let us observe that $\bin( u_n)  = 10^21^40^6\ldots 0^{4n-2}1^{4n}$ and $\bin( v_n)  = 10^21^4\ldots 1^{4n}0^{4n+1}1$.
Using recursive formula for $e( n) $, binary representations and induction we will get that 
$e( 2^k-u_n) =k-2n-1$ and $e( 2^k-v_n) =k-2n-2$. 
 Let us prove our theorem. We proceed by induction. 
 For $u_0=1$ we have $B_{2^k-1} = t^{k-1}+t^{k-2}+\ldots+t+1$ and our conditions are satisfied. For $v_0=5$ we have 
 \begin{align*}
 B_{2^k-5}( t) &=( 1+t) B_{2^{k-2}-1}( t) +tB_{2^{k-3}-1}( t) \\
 &=( 1+t) t^{k-3}B_{2^{k-2}-1}( t^{-1}) +t^{k-3}B_{2^{k-3}-1}( t^{-1})  \\
 &=t^{k-2} \left(  ( 1+t^{-1}) B_{2^{k-2}-1}( t^{-1}) +t^{-1}B_{2^{k-3}-1}( t^{-1})  \right)  \\
 &=t^{k-2}B_{2^k-5}( t^{-1}) . \\ 
 \end{align*}
 Let us compute
 \begin{align*}
  B_{2^k-u_n}( t)  &= B_{2^k-\overline{10^21^40^6\ldots 0^{4n-2}1^{4n}}}( t)   \\
  &= B_{2^{k-1}-\overline{10^21^40^6\ldots 0^{4n-2}1^{4n-1}}}( t) +B_{2^{k-1}-\overline{10^21^40^6\ldots 0^{4n-3}10^{4n-1}}}( t)   \\
  &= B_{2^{k-1}-\overline{10^21^40^6\ldots 0^{4n-2}1^{4n-1}}}( t) +t^{4n-1}B_{2^{k-4n}-\overline{10^21^40^6\ldots 0^{4n-3}1}}( t)   \\
  &=B_{2^{k-1}-\overline{10^21^40^6\ldots 0^{4n-2}1^{4n-1}}}( t) +t^{4n-1}B_{2^{k-4n}-v_{n-1}}( t)  \\
  &= \ldots \\
  &= B_{2^{k-4n}-\overline{10^21^40^6\ldots 0^{4n-2}}}( t) +( t^{4n-1}+t^{4n-2}+\ldots+t+1) B_{2^{k-4n}-v_{n-1}}( t)   \\
  &= t^{4n-2}B_{2^{k-8n+2}-u_{n-1}}( t) +( t^{4n-1}+t^{4n-2}+\ldots+t+1) B_{2^{k-4n}-v_{n-1}}( t) . = ( *)  \\  
  \end{align*}
Using induction hypothesis we get
 \begin{align*}
( *)  &= t^{k-6n+1}B_{2^{k-8n+2}-u_{n-1}}( t^{-1}) + 
( t^{4n-1}+t^{4n-2}+\ldots+t+1) t^{k-6n}B_{2^{k-4n}-v_{n-1}}( t^{-1})  \\
&= t^{k-2n-1}\left( t^{2-4n}B_{2^{k-8n+2}-u_{n-1}}( t^{-1}) +( t^{4n-1}+t^{4n-2}+\ldots+t+1) t^{1-4n}B_{2^{k-4n}-v_{n-1}}( t^{-1})  \right)  \\
&= t^{k-2n-1}B_{2^k-u_n}( t^{-1}) .\\
\end{align*} 
So $B_{2^k-u_n}$ is reciprocal. In the same way one can prove that $B_{2^k-v_n}$ is reciprocal. We omit the details. 
\end{proof}

\begin{col} Let $Rec = \{n | B_n( t) =t^{e( n) }B_n( t^{-1}) \}$. We have that $\#( Rec \cap [1,n])  = \Omega(  \log( n) ^{3/2}) $.  
\end{col}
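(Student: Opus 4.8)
The plan is to prove the lower bound using only the family $(B_{2^k-u_n})_k$ from the previous theorem, by counting how many of these reciprocal numbers lie in $[1,N]$. (I write $N$ for the bound in order not to clash with the index $n$ of $u_n$.) First I would record the bit-length of $u_n$: from $\bin(u_n)=10^21^40^6\ldots0^{4n-2}1^{4n}$ the number of digits is $1+(2+4+\cdots+4n)=4n^2+2n+1=:d_n$, so that $2^{d_n-1}\le u_n<2^{d_n}$. This $d_n$ is exactly the threshold $k\ge 4n^2+2n+1$ appearing in the theorem, so $2^k-u_n$ is reciprocal whenever $k\ge d_n$.

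The crucial point is the injectivity of the assignment $(n,k)\mapsto 2^k-u_n$. For $k\ge d_n+1$ we have $u_n<2^{d_n}\le 2^{k-1}$, hence $2^{k-1}<2^k-u_n<2^k$, so $m:=2^k-u_n$ has exactly $k$ binary digits. Thus from $m$ alone one recovers $k$ as its bit-length and then $u_n=2^k-m$; since $(u_n)$ is strictly increasing this determines $n$. Consequently the integers $2^k-u_n$ with $k\ge d_n+1$ are pairwise distinct, and each is reciprocal and lies in $[1,2^k)$. Discarding the single index $k=d_n$ for each $n$ costs nothing asymptotically.

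With distinctness in hand the count is a routine estimate. Fix $N$ and put $L=\lfloor\log N\rfloor$, so $2^L\le N$. Every pair $(n,k)$ with $d_n+1\le k\le L$ yields a distinct reciprocal $m=2^k-u_n$ with $1\le m<2^k\le 2^L\le N$, whence
\[
\#(Rec\cap[1,N])\ \ge\ \sum_{n\ge 0,\ d_n\le L-1}\bigl(L-d_n\bigr).
\]
Restricting the sum to $0\le n\le \tfrac13\sqrt L$, each such $n$ satisfies $d_n=4n^2+2n+1\le\tfrac49L+\tfrac23\sqrt L+1\le L-1$ and $L-d_n\ge \tfrac12 L$ for $N$ large; since there are at least $\tfrac13\sqrt L$ admissible values of $n$, the sum is at least $\tfrac16 L^{3/2}$. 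This gives $\#(Rec\cap[1,N])=\Omega\bigl((\log N)^{3/2}\bigr)$.

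The only genuine obstacle is the distinctness step: without it the $(n,k)$-count could overcount, since a priori one must rule out collisions $2^{k_1}-u_{n_1}=2^{k_2}-u_{n_2}$. The bit-length argument resolves this cleanly by letting $m$ dictate both $k$ and $n$. I would not invoke the $v_n$-family at all; it could only improve the implied constant, whereas for the $\Omega$ bound the $u_n$-family already suffices.
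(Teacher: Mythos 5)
Your proof is correct and follows essentially the same route as the paper: both count the pairs $(n,k)$ with $k$ above the threshold $4n^2+2n+1$, each yielding a reciprocal number $2^k-u_n$ below $N$, and bound the resulting sum from below by $\Theta\bigl((\log N)^{3/2}\bigr)$ using only the $u_n$-family. Your explicit injectivity argument via bit-lengths is a point the paper's proof leaves implicit, so it is a welcome addition rather than a deviation.
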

\begin{proof}
 Let $n>2^k$ we have that 
\begin{align*}
\#( Rec \cap [1,n])  & \geq \#\{ ( i,m)  | u_m < 2^i , i < k\} = \sum_{u_m<2^k} ( k-\log( u_m) )   \\
  &= \sum_{4m^2+2m+1<k} ( k-( 4m^2+2m+1) )  = \frac 12 k\sqrt{k} - \frac 43 \left( \frac{\sqrt{k}}{2} \right) ^3 + o( k\sqrt{k})  = \theta( k \sqrt{k}) , 
\end{align*}
and our result follows.
 
\end{proof}

We expect the following conjecture about density of the set $Rec$
\begin{conj}
 The function $f( n)  = \# ( Rec \cap [1,n]) $  is $O( \log( n) ^k) $ for 
some constant $k$. 
\end{conj}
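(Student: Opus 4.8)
The plan is to establish the \emph{matching upper bound}: that $Rec$ is so thin that $f(n)=\#(Rec\cap[1,n])$ grows only polylogarithmically. Throughout I would work with the transfer--matrix description of the family $B_n$ that already underlies the automaton of Section~2. Reading $\bin(n)=\epsilon_L\epsilon_{L-1}\cdots\epsilon_0$ from the most significant bit and letting the matrices
\begin{equation*}
M_0=\begin{pmatrix} t & 0\\ 1 & 1\end{pmatrix},\qquad M_1=\begin{pmatrix} 1 & 1\\ 0 & t\end{pmatrix}
\end{equation*}
act on the column vector $(B_m,B_{m+1})^{T}$, the recurrences $B_{2m}=tB_m$ and $B_{2m+1}=B_m+B_{m+1}$ give $(B_n,B_{n+1})^{T}=M_{\epsilon_0}\cdots M_{\epsilon_L}(0,1)^{T}$. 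Since $B_n(0)=n\bmod 2$, a nonzero reciprocal $B_n$ must have constant term and leading coefficient both equal to $1$; in particular $n$ is odd and the whole coefficient vector of $B_n$ is a palindrome. The examples $2^m-1$, $2^m-5$ and the families $u_n,v_n$ of Section~4 all have the shape $2^k-w$ for a \emph{structured} defect $w$, and the whole strategy is to prove that this shape is forced.

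The key structural step, and the one I expect to be the main obstacle, is a \textbf{characterization of $Rec$ up to the defect description}: I would show that if $B_n$ is reciprocal then $\bin(n)$ decomposes into a bounded alphabet of blocks, so that $n=2^k-w$ with $w$ ranging over a recursively defined set $\mathcal W\supseteq\{u_m\}\cup\{v_m\}$. The tool is to track how the palindrome constraint propagates under the two recurrences: multiplication by $M_0$ (a leading $0$) merely shifts the polynomial by a factor $t$ and so cannot on its own create reciprocity, while a $1$ digit adds $B_m$ to $B_{m+1}$ and couples the two ends of the coefficient vector. Comparing the top and bottom coefficients of $B_n$ and peeling matched blocks from the two ends of $\bin(n)$ should reduce the reciprocity of $B_n$ to the reciprocity of a strictly smaller $B_{n'}$, together with a finite list of admissible block transitions; this mirrors the inductive block--peeling already used in the proof of the reciprocity theorem of Section~4. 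Iterating the reduction and showing that only finitely many block types can chain is exactly the content of the desired inclusion $Rec\subseteq\{2^k-w\mid w\in\mathcal W\}$.

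Granting such a characterization, the counting is routine and parallels the proof of the preceding corollary. The recursive construction of $\mathcal W$ forces the $j$-th smallest defect to have binary length growing at least polynomially in $j$ (for $u_m$ one has $\log u_m\sim 4m^2$), so $\#(\mathcal W\cap[1,2^M])=O(M^{c})$ for some constant $c$. Writing $N=2^K$, every reciprocal index in $[1,N]$ is of the form $2^k-w$ with $w\in\mathcal W$ and $\log w<k\le K$, so
\begin{equation*}
\#(Rec\cap[1,N])\le\sum_{\substack{w\in\mathcal W\\ w\le N}}\bigl(K-\log w+1\bigr)\le (K+1)\,\#(\mathcal W\cap[1,2^K])=O\bigl(K^{\,c+1}\bigr)=O\bigl((\log N)^{c+1}\bigr),
\end{equation*}
which is the conjectured polylogarithmic bound with $k=c+1$.

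The difficulty is concentrated entirely in the structural step. The known families give only \emph{sufficiency} (hence the $\Omega((\log n)^{3/2})$ lower bound), whereas the conjecture needs \emph{necessity}: that no sporadic reciprocal indices exist outside the block families. Because reciprocity is a global condition on the unboundedly many coefficients of $B_n$, it is not obviously a finite--state property of $\bin(n)$, and the crux is to replace it by a \emph{bounded} invariant---a ``reciprocity defect'' propagated along the digits---whose vanishing forces the block structure. Controlling the propagation of this defect, and ruling out long irregular binary patterns that accidentally symmetrize, is precisely what keeps the statement a conjecture.
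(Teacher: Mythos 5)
This statement is Conjecture 4.3 of the paper: the paper offers \emph{no} proof of it (it proves only the lower bound $\#(Rec\cap[1,n])=\Omega(\log(n)^{3/2})$ in the preceding corollary), so your proposal must stand on its own, and it does not: it is a program with its decisive step missing, as you yourself concede in the final paragraph. The gap is the inclusion $Rec\subseteq\{2^k-w\mid w\in\mathcal W\}$ with $\mathcal W$ thin. Everything in the paper (the $2^m-1$, $2^m-5$ observations and the $u_n,v_n$ families of Section~4) runs in the \emph{sufficiency} direction: the block-peeling induction there verifies reciprocity for prescribed binary patterns, and nothing in it reverses to show that reciprocity forces such a pattern. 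Your proposed ``reciprocity defect'' propagated along digits is not obviously well defined: reciprocity is a palindrome condition on all $e(n)+1$ coefficients of $B_n$, and these coefficients are unbounded integers (already $B_5(t)=2t+1$ has leading coefficient $2$), so there is no a priori finite-state or bounded invariant of $\bin(n)$ that captures it; making the ``comparison of top and bottom coefficients'' into a finite transition system is precisely the open problem, not a step one can grant.

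There is also a second, independent gap in the counting step even if the characterization were granted. From ``$\mathcal W$ is recursively defined'' you cannot conclude $\#(\mathcal W\cap[1,2^M])=O(M^c)$: the known families are single chains with $\log u_m\sim 4m^2$, but if the hypothetical block grammar allowed block lengths to vary independently (which nothing in your reduction rules out), the number of admissible defects of binary length at most $M$ could be of order $2^{\Theta(\sqrt M)}$, giving $f(n)=\exp\left(\Theta\left(\sqrt{\log n}\right)\right)$ and destroying the polylogarithmic bound. So the conjecture needs not just a block characterization of $Rec$ but a strong rigidity statement about which blocks can occur, and neither is established. The parts of your write-up that are sound --- the transfer-matrix formalism with $(B_n,B_{n+1})^{T}=M_{\epsilon_0}\cdots M_{\epsilon_L}(0,1)^{T}$, the observation that reciprocal $B_n$ forces $n$ odd with leading coefficient $1$, and the final summation $\sum_{w\le N}(K-\log w+1)$ mirroring the paper's lower-bound computation --- are correct but routine; the statement remains a conjecture.
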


\end{section}

\footnote{Maciej Gawron}
\footnote{Jagiellonian University}
\footnote{Institute of Mathematics}  
\footnote{\L{}ojasiewicza 6, 30-348 Krak\'{o}w, Poland}
\footnote{e-mail: \texttt{maciej.gawron@uj.edu.pl} }

\end{document}